\newcounter{thmcount}
\newtheorem{theorem}[thmcount]{Theorem}
\newtheorem{definition}{Definition}[section]
\newtheorem{proposition}{Proposition}[section]
\newtheorem{lemma}[proposition]{Lemma}
\newtheorem{corollary}[proposition]{Corollary}
\newtheorem*{remark}{Remark}
\theoremstyle{definition}
\newcommand{\R}{\mathbb{R}}
\newcommand{\Q}{\mathbb{Q}}
\newcommand{\N}{\mathbb{N}}
\newcommand{\ve}{\varepsilon}
\begin{document} 

\title{On $\mu$-Compatible Metrics and Measurable Sensitivity}


\author[Grigoriev]{Ilya Grigoriev}
\address[Ilya Grigoriev]{Department of Mathematics\\Stanford University\\Stanford, CA 94305 USA}
\email{ilyagr@stanford.edu}

\author[Iordan]{Marius C\u{a}t\u{a}lin Iordan}
\address[Marius C\u{a}t\u{a}lin Iordan]{Williams College\\ Williamstown, MA 01267, USA}
\email{mci@cs.stanford.edu}

\author[Lubin]{Amos Lubin}
\address[Amos Lubin]{Harvard College\\ University Hall \\
Cambridge, MA 02138, USA}
\email{lubin@math.berkeley.edu}

\author[Ince]{Nathaniel Ince}
\address[Nathaniel Ince]{Massachusetts Institute of Technology\\ 77 
Massachusetts Ave.\\
Cambridge, MA 02139-4307, USA}
\email{incenate@gmail.com}

\author[Silva]{Cesar E. Silva}
\address[Cesar Silva]{Department of Mathematics\\
     Williams College \\ Williamstown, MA 01267, USA}
\email{csilva@williams.edu}


\begin{abstract}
  We introduce the notion of W-measurable sensitivity, which extends
  and strictly implies canonical measurable sensitivity, a
  measure-theoretic version of sensitive dependence on initial
  conditions. This notion also implies pairwise sensitivity with
  respect to a large class of metrics. We show that nonsingular 
   ergodic and conservative  dynamical systems on standard spaces must be either
  W-measurably sensitive, or isomorphic mod 0 to
  a minimal uniformly rigid isometry. In the finite measure-preserving case 
  they are  W-measurably sensitive or measurably  isomorphic to an ergodic isometry on
  a compact metric space.
\end{abstract}

\maketitle


\section{Introduction}

The notion of sensitive dependence on initial conditions is an extensively studied isomorphism invariant  of topological dynamical systems on compact metric spaces (\cite{GW93}, \cite{A-B96}). In \cite{J-S08}, the authors define two measure-theoretic versions of sensitive dependence, measurable sensitivity and strong measurable sensitivity, and show that, unlike their traditional topologically-dependent  counterpart, both of these properties carry up to measurable-theoretic isomorphism. James et. al. introduce these notions for nonsingular transformations and show that measurable sensitivity is implied by double ergodicity (a property equivalent to weak mixing in the finite measure-preserving case) and strong measurable sensitivity is implied by light mixing in the finite measure-preserving case. 


In this paper, we introduce W-measurable sensitivity, a notion that is
{\it a priori} stronger than measurable sensitivity and implies it
straightforwardly. We use this new  property, together with
properties of $\mu$-compatible metrics (see below), to formulate a  
classification of all nonsingular conservative and  ergodic
transformations on standard Borel spaces as being either W-measurably
sensitive or isomorphic to a minimal uniformly rigid isometry; in the case
of finite invariant measure we obtain more, namely W-measurably
sensitive or isomorphic to a minimal uniformly rigid invertible isometry on a compact metric
space. In the course of this proof, we
also show that W-measurable sensitivity is in fact equivalent to
measurable sensitivity for conservative and ergodic transformations.

In addition, we show (see Appendix A) that the notion of W-measurable
sensitivity is closely related to {\it pairwise sensitivity}, a notion
introduced in \cite{cadre2005ps} for finite measure-preserving transformations. In their paper, Cadre and Jacob show
that weakly mixing finite measure-preserving  transformations always exhibit pairwise
sensitivity, and also  any ergodic  finite measure-preserving transformation satisfying a
certain entropy condition. Our results imply that any   finite measure-preserving ergodic
transformation that is not isomorphic mod 0 to a Kronecker
transformation will exhibit pairwise sensitivity with respect to any
$\mu$-compatible metric (in addition to W-measurable sensitivity).

The plan of the paper is as follows.  Section~\ref{S:defs} recalls basic definitions from \cite{J-S08} and introduces $\mu$-compatible metrics and some of their properties. In Section~\ref{sec:w-meas-sens} we define
W-measurable sensitivity.  Section~\ref{S:Lipshitz Metrics} starts by construncting 
$1$-Lipshitz metrics from any  metric on a dynamical system, and then shows that W-measurable sensitivity can be equivalently expressed in
  additional ways using properties of $\mu$-compatible metrics. In 
Section~\ref{sec:cond-1-lip-mu-comp}, we provide a sufficient
condition under which the newly constructed $1$-Lipshitz metric is in
fact $\mu$-compatible, and discuss consequences of this fact largely from \cite{A-G01}.  In Section~\ref{sec:Constructing-mu-comp.-metr-on-isom.-copies} we discuss the invariance of W-measurable sensitivity under measurable isomorphism, as well as the technical assumptions necessary
for it to hold. We also illustrate the main connection between
$1$-Lipshitz metrics and W-measurable sensitivity, namely that a
conservative and ergodic nonsingular dynamical system is W-measurably
sensitive if and only if all dynamical systems $(X',\mu',T')$
isomorphic mod 0 to it admit no $\mu'$-compatible $1$-Lipshitz
metrics. Finally, in Section~\ref{sec:meas-sens-fin-measure} we prove our main result, which
classifies all conservative and ergodic, nonsingular  transformations on 
standard Borel spaces  as being either W-measurably sensitive, or isomorphic
to a minimal uniformly rigid invertible isometry. A corollary of this fact is that for conservative and ergodic  transformations, W-measurable sensitivity is
equivalent to  measurable sensitivity as defined in
\cite{J-S08}.  We end the section by obtaining a stronger result in the case of ergodic
finite measure-preserving transformations. 

In  Appendix~\ref{sec:pairwise-sensitivity} elaborates on the relationship between our results and the notion of pairwise sensitivity as introduced in \cite{cadre2005ps} and mention
the recent work in \cite{H-L-Y2010}.

\subsection{Acknowledgements}
This paper is based on research by the Ergodic Theory group of the
2007 SMALL summer research project at Williams College.  Support for
the project was provided by National Science Foundation REU Grant DMS
- 0353634 and the Bronfman Science Center of Williams College. The first-named
author would also like to acknowledge support by an NSF graduate fellowship.

We are indebted to the referee   for a careful  reading of the manuscript and several comments and suggestions
that improved our paper. We thank Ethan Akin for several remarks  including 
 an argument that removed the assumption of forward measurability in an 
earlier version of our paper, the proof of Proposition \ref{P:invertibleisometry},   and for bringing \cite{H-L-Y2010} to our attention.

\section{Preliminary Definitions}\label{S:defs}

A {\it nonsingular dynamical system} is a quadruple $(X,\mathcal {S}(X),\mu,T)$,
where $(X,\mathcal {S}(X),\mu)$ is 
 a standard nonatomic Lebesgue space (i.e., $(X,\mathcal {S}(X))$ is a standard
 Borel space, see e.g. \cite{Sri98}, and  $\mu$ is a 
$\sigma$-finite, nonatomic measure on $\mathcal{S}(X)$).   It follows that $X$ must be of cardinality $c$ as the 
measure is nonatomic.   Furthermore, the transformation 
$T$ is  measurable and  a nonsingular endomorphism (i.e.,  for all
$A \in \mathcal{S}(X)$, $T^{-1}(A)\in\mathcal S(X)$ and
 $\mu(A)=0$ if and only if $\mu(T^{-1}(A))=0$, see e.g. \cite{Si08}). In some cases we  assume that $T$ is
measure-preserving or that the measure space is   finite. 
Recall that $T$ is conservative and ergodic if and only if for all measurable sets $A$,
if $T^{-1}(A)\subset A$, then $\mu(A)=0$ or $\mu(A^c)=0$.

We consider metrics or pseudo-metrics on $X$.  We assume throughout this article that all pseudo-metrics
$d:X\times X\to\mathbb R$ are (Borel) measurable and bounded by $1$ (one can replace
$d$ by $\frac{d}{1+d}$). It follows that, for each $\ve>0$, the set  $\{(x,y)\in X\times X: d(x,y)<\ve\}$ 
is measurable. Therefore, by e.g. \cite[Exercise 3.1.20]{Sri98}, the balls
\[
B^d(x,\ve)=\{y\in X: d(x,y)<\ve\}
\] are measurable.
For a pseudo-metric $d$ define
\begin{align*}
\mathcal D^d(x)&=\max\{\ve\geq 0: \mu(B^d(x,\ve))=0\} \text { and }\\
Dis(d)&=\{x\in X: \mathcal D^d(x)>0\}.
\end{align*}

A (measurable) metric $d$ on $(X,\mathcal{S}(X),\mu)$ is said to be {\it
  $\mu$-compatible} if $\mu$ assigns positive (nonzero) measure to all
nonempty, open $d$-balls in $X$, equivalently if $Dis(d)=\emptyset$, or if $\mathcal D^d(x)=0$ for 
all $x\in X$.  If $d$ a $\mu$-compatible metric on
$(X,\mathcal{S}(X),\mu)$, then $X$ is separable under $d$
(see \cite[1.1]{J-S08} and Proposition~\ref{P:metricprop} below). Therefore open sets are measurable 
as they are countable unions of balls.  All $d$-closed sets are also
measurable, etc. We say that $d$ is {\it $\mu$-separable} if  $\mu(Dis(d))=0$,
or equivalently $\mathcal D^d(x)=0$ a.e.
If follows that if $d$ is $\mu$-separable, then the restriction of $d$ to
$X\setminus Dis(d)$ is $\mu$-compatible.


\begin{proposition}\label{P:metricprop} Let $(X,\mathcal{S}(X),\mu,T)$ be a nonsingular dynamical system
and let $d$ be a pseudo-metric on $X$. 
\begin{enumerate}
\item  The function $\mathcal D^d(x)$ is continuous with respect to $d$ and measurable.
\item  \label {e:separable} The pseudo-metric $d$ is separable when restricted to  
  $X\setminus Dis(d)$. In particular, if $d$ is $\mu$-compatible,
then it is separable on $X$.
\item  $Dis(d)$ is open with respect to $d$ and measurable.
\item A pseudo-metric $d$ is $\mu$-separable if and only if there exists a measure zero subset $Z$ of $X$ 
such that $d$ restricted to $X\setminus Z$ is separable.
\end{enumerate}
\end{proposition} 

\begin{proof}
(1) Suppose  that  $\beta< \mathcal D^d(x) < \alpha$.
Set
\[\delta=\frac12 \min\{\mathcal D^d(x)-\beta,\alpha-\mathcal D^d(x)\}.
\] The for each $y\in B^d(x,\delta)$ we have
\begin{align*}
B^d(y,\beta)&\subset B^d(x,\beta+\delta),\\
B^d(x,\alpha-\delta)&\subset B^d(y,\alpha).
\end{align*}
Since $\beta+\delta<\mathcal D^d(x)$, $\mu(B^d(x,\beta+\delta))=0$, so $\mu(B^d(y,\beta))=0$
and 
$\mathcal D^d(y) \geq \beta$. Similarly we obtain that  $\mathcal D^d(y)\leq \alpha$.
This implies that  $\mathcal D^d$ is continuous with respect to $d$, and therefore measurable.

(2) For $0<\ve<1$, let $A_\ve\subset X\setminus Dis(d)$ be such that
if $x, y\in A_\ve$ then $d(x,y)\geq \ve$ and let it be maximal with respect to this property.
It follows that \[\{B^d(x,\ve/2): x\in A_\ve\}\] is a collection of disjoint sets
of positive measure and since $\mu$ is $\sigma$-finite, this collection is countable. 
This shows that each $A_\ve$ is countable. Then the union $\bigcup A_{1/n}$,  for $n\in\N$,
is  a countable set that is dense in $X\setminus \mathcal D^d$ for the metric $d$.
 
 (3) Since $\mathcal D^d$ is continuous by part (1),  $Dis(d)$ is open with respect to $d$.
 By part \eqref{e:separable}, every open set that is contained in $X\setminus Dis(d)$ 
 is a countable union of balls, hence it is measurable. Similarly, closed sets contained in 
 $X\setminus Dis(d)$ are measurable. In particular, $X\setminus Dis(d)$, and so 
 $Dis(d)$, are measurable.

(4) Suppose that   $\mu(Dis(d))>0$ and let $Z\subset X$ be such that
$\mu(Z)=0$.  We show that $d$ is not separable on the  subset $Dis(d)\setminus Z$
of $X\setminus Z$. We first note that 
 the collection 
\[\{B^d(x,\mathcal D(x)):  x\in Dis(d)\setminus Z\}\]
is an open cover of $Dis(d)\setminus Z$, and since $Dis(d)\setminus Z$ has positive measure
and each of the balls has measure zero (by definition of $Dis(d)$), the collection 
cannot have 
a countable subcover. Conversely, if $\mu(Dis(d))= 0$ we can let $Z=Dis(d)$ and 
use part \eqref{e:separable}.
\end{proof}

\begin{proposition}\label{P:metricpropD} Let $(X,\mathcal{S}(X),\mu,T)$ be a nonsingular dynamical system
and let $d$ be a pseudo-metric on $X$.  Let $\delta>0$.
If $\mathcal D^d(x)\geq \delta $ for almost all $x\in X$, then
$\mathcal D^d(x)\geq \frac{\delta}{2}$ for all $x\in X$.
\end{proposition}

\begin{proof}  Let 
\[Z=\{z\in X: \mathcal D^d(z)\geq \delta\} = \{z\in X: \mu(B^d(z,\delta))=0\}.
\]We know that $\mu(Z^c)=0$.
Suppose  $\mathcal D^d(x)<  \frac{\delta}{2}$ for some $x\in X$. Then
$\mu(B^d(x,\delta/2))>0$. So there exists $z\in B^d(x,\delta/2) \cap Z$.
By the triangle inequality,  $B^d(x,\delta/2)\subset B^d(z,\delta)$.
This means that $\mu(B^d(z,\delta))>0$, a contradiction.
\end{proof}

\section{W-measurable Sensitivity}\label{sec:w-meas-sens}

We start by recalling the definition of measurable sensitivity. 

\begin{definition} \cite{J-S08} \label{t:wmsdef}
 A  nonsingular dynamical system  $(X,\mathcal{S}(X),\mu,T)$ is said to be 
 \emph{measurably sensitive} if for every  isomorphic mod $0$ dynamical
system $(X_1,\mathcal{S}(X_1),\mu_1,T_1)$   and any  $\mu_1$-compatible metric 
 $d$  on $X_1$, then
there exists $\delta
> 0$ such that for   $x \in X_1$ and all  $\varepsilon > 0$ there exists
$n \in \mathbb{N}$ such that \[\mu_1 \{y\in B_{\varepsilon}(x):
d(T_1^n(x),T_1^n(y)) > \delta \} > 0.\]
\end{definition}

We now introduce the definition that we shall be using extensively.

\begin{definition}
  \label{def:Lightly-meas-sens} For a $\mu$-compatible metric $d$, a
  nonsingular dynamical system $(X,\mu,T)$ is \emph{W-measurably
    sensitive} \emph{with respect to} $d$ if there is a $\delta > 0$
  such that for every $x \in X$,
  \[ \limsup_{n\to\infty}d(T^{n}x,T^{n}y)>\delta\]
  for almost every $y\in X$. The dynamical system is said to be
  \emph{W-measurably sensitive} if the above definition holds true for
  all $\mu$-compatible metrics $d$.
\end{definition}

\noindent{\bf Remark.}
(1) As in \cite{J-S08},
  it
  can be shown that a doubly ergodic nonsingular transformation is W-measurably
  sensitive.  (Double ergodicity is a condition for nonsingular transformations that
  is  equivalent to weak
  mixing in the finite measure-preserving case \cite{Fu81}.)
  There exist both infinite (and finite) measure-preserving 
and nonsingular type III (i.e., not admitting an equivalent $\sigma$-finite invariant measure) invertible transformations that are doubly ergodic
(see e.g. \cite{DS09}), and therefore W-measurably sensitive.
 
(2)  \label{sec:remark-nonatomic}
 If a measure space
  $(X,\mu)$ has atoms, no transformation on it
  can exhibit W-measurable sensitivity with respect to any metric.
  Indeed, for any $x \in X$, and any $\delta$, the set of points $y$
  such that $\limsup_{n \to \infty} d(T^n x, T^n y) > \delta$ cannot
  include $x$. So  this set cannot have full measure (i.e., its complement has measure zero)  if
  $\mu(\{x\}) >0$.

  The same is not true about  measurable sensitivity. For this
  reason, throughout this paper we  assume that our measure
  space is nonatomic. 

 (3)  \label{sec:invertible-examples} A very important example of an ergodic finite
 measure-preserving  dynamical system which is not
  W-measurably sensitive is a Kronecker transformation, i.e. an
  ergodic isometry on an interval of finite length (with the Lebesgue
  measure and the usual metric).  This transformation is not
  W-measurably sensitive with respect to the usual metric because it
  is an isometry.  There are also examples of  conservative and ergodic type III nonsingular invertible transformations
that are not W-measurable sensitive. Let $X=\Pi_{i=0}^\infty \{0,1\}$, the $2$-adic integers,  let $T$ addition by 1,
$Tx=x+1$, and $d$ be the $2$-adic metric. Then it is well known that $T$ is  a minimal isometry for $d$. Let $0<p<1$
and $\mu_p=\Pi_{i=0}^\infty\{p,1-p\}$, a probability measure on the Borel
$\sigma$-field $\mathcal B$. Then $\mu_p$ is a nonsingular measure for $(X,\mathcal B,T)$ that is conservative
and ergodic of type III (when $p\neq 1/2$), see e.g. \cite{DS09}. It is clear
that $d$ is $\mu_p$-compatible, so  $(X,\mathcal B,T)$ is a conservative 
ergodic invertible nonsingular transformation that is not finite measure-preserving
and is not W-measurably sensitive.

\bigskip
 
We note that, the property of W-measurable sensitivity is
preserved under measurable isomorphisms (Proposition~\ref{prop:light-meas-sens-and-meas-isomorph}). 

 W-measurable sensitivity clearly implies measurable sensitivity (see first part of  the proof of
Proposition~\ref{pro:equiv-weak-and-light}).
In fact, we show that the two notions are equivalent for conservative and ergodic dynamical
systems. We first show in Proposition~\ref{pro:mild-meas-sens2} that
for a transformation to be W-measurably sensitive, it is sufficient
for each $y \in Y$ to have \emph{one} value of $n$ that satisfies
$d(T^{n}x,T^{n}y)>\delta$. The remainder of the equivalence follows
from the results in the following sections, culminating with
Proposition~\ref{pro:equiv-weak-and-light}.

\section{Constructing $1$-Lipshitz Metrics}
\label{S:Lipshitz Metrics}

We shall use the term \emph{$1$-Lipshitz metrics (with respect to $T$)} to denote metrics that satisfy the inequality $d(Tx,Ty)\leq d(x,y)$
for all $x$ and $y$.

First, we provide a way to construct a $1$-Lipshitz metric from any other metric.

\begin{definition}
Let $(X,\mu,T)$ be a nonsingular dynamical system, and $d$ be a  
metric on $X$. Define, for   $x,y\in X$,  
 \[
d_{T}(x,y)=\sup_{n\geq0}d(T^{n}x,T^{n}y).\]

\end{definition}

\begin{lemma}
$d_{T}$ is a metric on $X$ (satisfying our standing assumptions: measurable 
and bounded). Moreover, it is a $1$-Lipshitz metric.
\end{lemma}

\begin{proof}
The first statement is left to the reader. To see that it is $1$-Lipshitz we compute,
\begin{align*}
d_T(Tx,Ty)&=\sup_{n\geq0}d(T^{n}(Tx),T^{n}(Ty)) = \sup_{n\geq1}d(T^{n}x,T^{n}y) \\
&\leq \sup_{n\geq0}d(T^{n}x,T^{n}y) =d_T(x,y).
\end{align*}
\end{proof}

\noindent{\bf Remark.}
In general, even if the metric $d$ is $\mu$-compatible,
the metric $d_{T}$ may not be  $\mu$-compatible. Consequently, there is no guarantee that
the measure space is separable under the topology determined by $d_T$.

For example, let $I$ be the unit interval, $\lambda$ be the Lebesgue
measure, and $d$ be the usual metric. Let $T:I\to I$ be the doubling
map $Tx=2x\pmod1$. Note that $d$ is a $\lambda$-compatible metric.

The metric $d_{T}$, however, is not $\lambda$-compatible. Indeed,
for any $x\not\in\mathbb{Q}$, and any $\varepsilon>0$, there will
be an $n$ such that $d(0,T^{n}x)>1-\varepsilon$. So, since $T(0)=0$,
we have\[
\sup_{n\geq0}d(T^{n}(0),T^{n}y)=\sup_{n\geq0}d(0,T^{n}y)=1.\]

In other words, for any $0<\delta<1$, the $\delta$ ball around 0
in the $d_{T}$ metric may contain only rational points. So,
$\lambda(B_{T_{\delta}}(0))=0$, and $d_T$ is not $\lambda$-compatible.

In this example, the transformation $T$  turns out to be
W-measurably sensitive. In fact, since  mixing, it is strongly
measurably sensitive (see \cite{J-S08}). On the other hand, we will
see that whenever the $1$-Lipshitz metric $d_T$ is $\mu$-compatible,
the corresponding transformation $T$ is not W-measurably sensitive.

\bigskip 

We now formulate several equivalent definitions of W-measurably sensitive 
transformations. We start by showing that while the original definition 
requires the existence of infinitely many times $n$ satisfying the condition, it
is sufficient to require the existence of one such $n$.

\begin{proposition}
\label{pro:mild-meas-sens2} Let $(X,\mu,T)$ be a nonsingular dynamical
system, and $d$ be a $\mu$-compatible metric. The following are
equivalent:
\begin{enumerate}
 \item The system is W-measurably sensitive with respect to $d$.
\item There is a $\delta>0$ such that, for each $x\in X$, for almost
every $y\in X$, 
\[d_T(x,y)> \delta.\]
\item There is a $\delta>0$ such that for each  $x\in X$,
\[
\mu(B^{d_T}(x,\delta))=0.
\]
\item There is a $\delta>0$ such that
for each  $x\in X$, 
\[\mathcal D^{d_T}(x)\geq \delta.\]
\item There is a $\delta>0$ such that for each  $x\in X$, 
\[\mathcal D^{d_T}(x) > \delta.\]
\end{enumerate}
\end{proposition}

\begin{proof} 
  $(2) \Rightarrow (1)$.
  Suppose that  there is a $\delta>0$ such that for each  $x\in X$, 
   for
  almost every $y\in X$, there exists $n$ such that
  $d(T^{n}x,T^{n}y) > \delta$. For every
  natural number $N$ and $x\in X$ define a set $Y(N,x)$ by:
  \[  
  Y(N,x) = \{y \in X:   \exists n > N,\: d(T^n x,T^n y) > \delta \}.
   \]

  We now prove that for all $N$ and $x$, the set $Y(N,x)$ has full
  measure. Consider the point $T^N x$. Using our assumption,
  for almost every $y\in X$, there exists $n$ such that
  $d\left( T^{n}(T^N x),T^{n}y \right) > \delta$. In other words, the set
  \[  
  Z(N,x) = \{y \in X:    \exists n > 0,\: d(T^{N+n} x,T^n y) > \delta\} \]
  has full measure.
Notice that $Y(N,x) = T^{-N}(Z(N,x))$. Since $T$ is a nonsingular
  transformation, $Y(N,x)$ must also have full measure.

  Finally, let $Y_x = \bigcap_{N=0}^{\infty} Y(N,x)$. Clearly,  $Y_x$
  has full measure. Furthermore, for every $y \in Y_x$, there
  are infinitely many values of $n$ such that 
  $d(T^{n}x,T^{n}y) > \delta$. So  
  \[
  \limsup_{n\to\infty} d(T^{n}x,T^{n}y) \geq \delta \]
  for almost all $y \in X$.
  Therefore the system $(X,\mu,T)$
  is W-measurably sensitive with respect to $d$.

  $(1) \Rightarrow (2)$.
 The converse is clear from the definitions.

  $(2) \Leftrightarrow (3)$. 
  If condition $(2)$ is satisfied at $x$ for some $\delta$, then $B^{d_T}(x,\delta)$ is
  contained in the complement of a set of full measure. So  $\mu(B^{d_T}(x,\delta))=0$.

  Conversely, if condition $(3)$ is satisfied at $x$ for some $\delta$, then
  $B^{d_T}(x,\delta)$ has measure zero. So  in particular, the set
  $\{ y \in X: 
  \forall n \geq 0, \: d(T^{n}x,T^{n}y) \leq  \delta/2 \}$ 
  has measure zero. Therefore, for almost every $y \in X$, there is
  some $n$ for which $d(T^nx,T^ny)>\delta / 2$, and condition $(2)$ is
  satisfied.
  
   The equivalence of $(3)$ and $(4)$ is clear form the definitions.
  The equivalence of $(4)$ and $(5)$ is clear since $\delta$ does not have to be the same.
\end{proof}

\noindent {\bf Remark.} From Proposition~\ref{P:metricpropD} it follows that 
in the equivalent characterizations of W-measurable sensitivity in Proposition~\ref{pro:mild-meas-sens2},
one can replace ``for each $x\in X$'' in parts $(2) - (5)$ with ``for a.e. $x\in X$."

\section{Conditions for $1$-Lipshitz metric $d_T$ to be
  $\mu$-compatible and Consequences}\label{sec:cond-1-lip-mu-comp}

Now, we provide a sufficient condition for the $1$-Lipshitz metric
$d_{T}$ to be $\mu$-compatible given that the transformation $T$ is
ergodic. 

The proof of the following lemma is standard, see for example 
\cite[Corollary 2.7]{ST91}.

\begin{lemma}\label{L:subinvatiantfunc}
Let $(X,\mu,T)$ be a conservative and
  ergodic nonsingular dynamical system.  Let $f:X\to\R$ be
  a measurable function. If $f\geq f\circ T$ a.e., then 
  $f=f\circ T$ a.e.
\end{lemma}

\begin{lemma}\label{L:Dineq}
Let $(X,\mu,T)$ be a nonsingular dynamical system, and $d$ be a  
metric on $X$. If $d$ is $1$--Lipshitz then
\[\mathcal D^d \geq \mathcal D^d\circ T \text{ on } X.
\]
\end{lemma}

\begin{proof} Let $T^*d$ denote the metric $T^*d(x,y)=d(Tx,Ty)$.
First we observe
\begin{align*}
T^{-1} B^d(Tx,\ve)&=\{y\in X: d(Tx,Ty)<\ve\}=B^{T^*d}(x,\ve).
\end{align*}
Since $T$ is nonsingular, $\mu(B^{T^*d}(x,\ve))=0$ if and only if
$\mu(B^{d}(Tx,\ve))=0$. It follows that
\[
\mathcal D^{T^*d}(x)=\mathcal D^d(Tx)\text{ for all }x\in X.
\]
Since $T$ is  $1$-Lipshitz, $d(x,y)\geq d(Tx,Ty)$, which implies
\[
\mathcal D^d(x) \geq \mathcal D^{T^*d}(x)\text { for all }x,
\] completing the proof.
\end{proof}

Now, we are ready to state the sufficient condition the $1$-Lipshitz
metric $d_{T}$ to be $\mu$-compatible which is our main tool in proving the main results in Section~\ref{sec:meas-sens-fin-measure}.

\begin{lemma}
  \label{lem:when-d'-is-mu-comp}   Let $(X,\mu,T)$ be a conservative and
  ergodic nonsingular dynamical system. Let $d$ be a $\mu$-compatible metric on $X$. Suppose
  further that 
$T$ is not W-measurably sensitive with respect to $d$.
   Then 
there exists a positively invariant measurable set $X_1$ of full measure
(i.e., $X_1\subset T^{-1}(X_1)$  and $\mu(X\setminus X_1)=0$)   
   such that $d_T$ is a $\mu$-compatible
  metric for the system  $(X_1,\mu,T)$, where  $\mu$ 
  and $T$
  are  the restrictions to $X_1$ of the original measure and transformation.
  \end{lemma}

\begin{proof}   
 First we observe that
  \begin{align}\label{E:Disinvariance}
T^{-1}(Dis(d_T))\subset Dis(d_T).
 \end{align}
In fact, if $Tx\in Dis(d_T)$, then $\mathcal D^{d_T}(Tx)>0$.
Since $d_T$ is $1$-Lipschitz, by Lemma~\ref{L:Dineq}, $\mathcal D^{d_T}(x)>0$,
so $x\in Dis(d_T)$.  Therefore $T$ can be restricted to a transformation 
on the positively invariant set   $X_1=X\setminus Dis(d_T)$. 

Since $T$ is conservative and ergodic it follows from \eqref{E:Disinvariance} that
$\mu(Dis(d_T))=0$ or $\mu(Dis(d_T)^c)=0$. If it were the case that
$\mu(Dis(d_T)^c)=0$ then there would exist $r>0$ such that
$\mathcal D^{d_T}(x)>r$ on a set of positive measure, hence by Lemmas \ref{L:Dineq} and \ref{L:subinvatiantfunc}, as $T$ is conservative and
ergodic, the condition holds for  a.e. $x$, but this contradicts
the hypothesis by the Remark following Proposition~\ref{pro:mild-meas-sens2}. Therefore $\mu(Dis(d_T))=0$
and $X_1$ is a set of full measure. (It follows also that
$\mathcal D^{d_T}(x)=0\text{ for a.e. }x\in X.$)

Clearly, $d_T$ is a metric
on $X_1$. To see that it is $\mu$-compatible we calculate,
for $x\in X_1$ and $\ve>0$, 
\[
\mu(B^{d_T}(x,\ve)\cap X_1)=\mu(B^{d_T}(x,\ve))>0.
\]

  \end{proof}

\begin{remark}
\rm{In relation to Lemma~\ref{lem:when-d'-is-mu-comp} , we note that 
it is possible that a system $(X,\mu,T)$ is not W-measurably
sensitive, but does not itself admit
any $\mu$-compatible metric $d$ that is $1$-Lipschitz.
For example, consider the dynamical system $(I,\lambda,T)$ where $I$
is the unit interval and $\lambda$ is the Lebesgue measure. Let
$\alpha$ be a fixed irrational number between 0 and 1. For any
$x\in I$, we define:\[ T(x)=\begin{cases}
  x & \text{if }x=n\cdot\alpha+m\text{ for some }n,m\in\mathbb{Z}\\
  x+\alpha\pmod1 & \text{otherwise.}\end{cases}\]
This system is ergodic and not measurably sensitive as it is measurably
isomorphic to a rotation. However, there is no $\lambda$-compatible
$1$-Lipshitz metric on $I$.

Indeed, suppose that there is a $\lambda$-compatible metric $d$ such that
$d(Tx,Ty) \leq d(x,y)$ for all $x,y\in I$. Let $B$ be a ball
of radius $\alpha/2$ around 0. Since $d$ is $\lambda$-compatible, $B$
must have positive measure. Furthermore, since $T(0)=0$, for any point
$b \in B$, we must have $d(T(b),0) \leq d(b,0) < \alpha/2 $ and,
therefore, $T(b) \in B$. So  $T$ maps a set of positive measure into
itself. This is impossible for a transformation isomorphic mod 0
to an irrational rotation.
}
\end{remark}

\bigskip
In the rest of this section, we describe some useful consequences of a 1-Lipshitz metric being $\mu$-compatible.

Let $(X,d)$ be a metric space and $T:X\to X$ a transformation. Let 
$\omega(x)$ denote the set of accumulation  points of the positive orbit 
$\{T^nx: n\in\mathbb N_0\}$.  A point $x\in X$ is a {\it transitive point} for $T$ if
$\omega(x)=X$. When $(X,d)$ has no isolated points this is equivalent to the  
  (positive) orbit of $x$  being dense in $X$. As we will only consider 
  $\mu$-compatible metrics where $\mu$ is nonatomic, all our metric
  spaces will have no isolated points.
 $T$ is 
{\it transitive} if it has a transitive point. 
The transformation $T$ is {\it minimal} if $\omega(x)=X$ for all $x\in X$.
It is {\it uniformly rigid } if there exists a sequence $n_i$ such that 
$d(T^{n_i}x,x)$ converges to $0$ uniformly on $X$.

The following lemma is essentially known.

\begin{lemma}\label{L:transitive}
Let $(X,\mu,T)$ be a conservative and
  ergodic nonsingular dynamical system.  If $d$ is  a 
  $\mu$-compatible metric on $X$, then
  $\mu$-a.e. point of $X$ is transitive. \end{lemma}

\begin{proof} Since by assumption $\mu$ is nonatomic, 
$d$ has no isolated points.  By Proposition~\ref{P:metricprop}, $(X,d)$ is
separable, so there exist $\{x_i: i\in\mathbb N\}$ dense in $X$. For each $r\in \Q, r>0,$ and 
each $i,N\in\N$, set
\[ A^*_{i,N,r}=
\bigcup_{n\geq N}T^{-n}( B^d(x_i,r)).\]
Since $T$ is conservative 
and ergodic, each $ A^*_{i,N,r}$ is of full measure. Finally let \[
B=\bigcap_{i,N,r}
 A^*_{i,N,r}.\] Clearly $B$ is of full measure and each point in $B$ has a dense orbit.
  \end{proof}
  
  The following proposition is essentially from \cite{A-G01}. 

\begin{proposition}
\label{pro:uniformlyrigid} Let $(X,d)$ be a metric space 
and let $T:X\to X$ be a $1$-Lipschitz transformation. If $T$
is transitive, then it is a uniformly rigid, minimal isometry.
\end{proposition}

\begin{proof} Let $x$ be a point such that $\omega(x)=X$.
(This in particular implies that the metric $d$ is separable.)
Let $\ve>0$. There exists an integer $k>0$ such that
$d(x,T^kx)<\ve$.  Since $T$ is  $1$-Lipschitz,
for all $n\in\mathbb N$, $d(T^nx,T^n(T^kx))<\ve$. 
Let $y\in X$. Since $T^k$ is continuous,
for $n$ such that $d(y,T^nx)$ is sufficiently small,
$d(T^ky,T^k(T^nx))<\ve$. Then
\begin{align*}
d(y,T^ky)&\leq d(y,T^nx)+d(T^nx,T^n(T^kx))+d(T^k(T^nx),T^ky)\\
&<3\ve.
\end{align*}
Therefore $T$ is uniformly rigid. Now, in this case there exists
a sequence $n_i\to\infty$ such that $d(T^{n_i}x,x)\to 0$ for
all $x\in X$. Therefore, for all $x,y\in X$,
\[
0\leq d(T^{n_i}x,T^{n_i}y)-d(x,y) \leq  d(T^{n_i}x,x)+ d(y,T^{n_i}y) \to 0.
\]
If $T$ were not an isometry there would exist $x,y\in X$,
such that $d(Tx,Ty)<d(x,y)$, but then $d(T^{n_i}x,T^{n_i}y)$
could not converge to $d(x,y)$.

Finally we show that $T$ is minimal. Again, let $\omega(x)=X$
and $y\in X$. Let $\ve>0$, $z\in X$. There exists $i\in \mathbb N$
such that $d(T^ix,y)<\ve$. Then we can choose $j\in \mathbb N$
so that $d(T^{i+j}x,z)<\ve$. Then
\begin{align*}
d(T^jy,z)&\leq d(T^jy,T^{j+i}x)+d(T^{j+i}x,z)\\
&\leq d(y,T^{i}x)+d(T^{j+i}x,z)<2\ve.
\end{align*}
Therefore $\omega(y)=X$.
\end{proof}

Now, let $\mathcal{C}_d(X,X)$ be the space of continuous maps from $X$ to itself, with the metric $d(S_1, S_2) = \sup_{x \in X} \{d(S_1 x, S_2 x)\}$. We also define a subset \[
\mathcal{J}_T = \left\{S\in \mathcal C_d(X,X) \mid S \circ T = T \circ S\right\}.
\]
This is clearly a sub-semigroup of $\mathcal{C}_d(X,X)$ under composition.

The following proposition is essentially from \cite{A-G01}. We are indebted to Ethan Akin for the proof.

\begin{proposition}\label{P:invertibleisometry}
Let $(X,d)$ be a metric space and let $T$ be a  transitive and  $1$-Lipshitz transformation.  Then, for each $x\in X$, the evaluation map \[ev_x: \mathcal{J}_T \to X\text{ defined by }S \mapsto Sx\] is an isometry. Also, the space $\mathcal{J}_T$ is the closure of sequence $\{id, T, T^2, \ldots\}$ in $\mathcal C_d(X,X)$. 
If in addition  the metric space $(X,d)$ is complete, then the evaluation map $ev_x$ is an
invertible isometry. Moreover, the semigroup $\mathcal{J}_T$ is then a group, and therefore $T \in \mathcal{J}_T$ has to be invertible. 
\end{proposition}

\begin{proof}
Fix a point $x \in X$ and let $S, S'\in \mathcal{J}_T$. We wish to show that the map $ev_x$ is an isometry. Since $S$ and $S'$ both commute with $T$, and $T$ is $1$-Lipshitz,  for all $m$,\[
d\left( S(T^m x) ,  S'(T^m x) \right)  \leq d(Sx,S'x).
\]
Since $S$ and $S'$ are both continuous and the set of all $\{T^m x\}$ is dense,  for all $y \in X$, $d(Sy,S'y) \leq d(Sx,S'x)$ and therefore \[
d(S,S')_{\mathcal C_d(X,X)} = \sup_{y\in X}d(Sy,S'y)_X = d(Sx, S'x)_X = d(ev_x S, ev_x S')_X
\] and so $ev_x$ is an isometry.

Now, the subset $\mathcal{J}_T$ is clearly closed in $\mathcal C_d(X,X)$. Fix some $S \in \mathcal{J}_T$ and $x \in X$. Since $T$ is minimal, $x$ is a transitive point, and so there is a sequence $\{n_j\}$ such that $\lim_{j \to \infty} T^{n_j}x = Sx$. In other words, $\lim_{j \to \infty} ev_x T^{n_j} = ev_x S$ in $X$. Since $ev_x$ is an isometry, this implies that $\lim_{j \to \infty} T^{n_j} = S$ in $\mathcal C_d(X,X)$, completing the proof of the first part of the proposition.

If we assume that the space $(X,d)$ is complete, so is the space $\mathcal C_d(X,X)$. For   $x\in X$, we show that $ev_x$ is surjective. 

Pick a $y\in X$. There is a sequence of $n_j$-s such that $T^{n_j}x\to y$. In particular, the sequence $ev_x\left( T^{n_j} \right)$ is Cauchy. Since $ev_x$ is an isometry, the sequence  $T^{n_j}$ is Cauchy in $\mathcal C_d(X,X)$. By completeness, it has a limit $S\in \mathcal J_d$ (since $\mathcal J_d$ is closed); clearly $ev_x S = y$ and $ev_x$ is surjective.

Now, let $S\in \mathcal J_d$ be arbitrary. Since the map $ev_{Sx}$ is surjective, we can pick an $S'$ so that \[
S'(Sx) = ev_{Sx}S' = x.
\] Since $ev_x (SS') = (S'S)x = x$ and $ev_x$ is injective, $S\circ S'$ is the identity, and $S' = S^{-1}$. So, all maps in $\mathcal J_d$ are invertible.
\end{proof}

\section{\label{sec:Constructing-mu-comp.-metr-on-isom.-copies}
W-measurable sensitivity on isomorphic mod 0 dynamical systems}

We prove that W-measurable sensitivity is invariant
under measurable isomorphism. Here we use that we are working on standard Borel spaces.

\begin{lemma} \label{lem:extension-of-mu-comp-metrics} Let $(X,\mathcal S)$
  be a standard Borel space, with $\mu$ a nonatomic measure on $\mathcal S$. Let
  $U\subset X$ be a Borel subset of full measure and
  let $d$ be a $\mu$-compatible metric defined on $U$. 
  Then the metric $d$ can be extended to a $\mu$-compatible metric $d_1$ on all
  of $X$ in such a way that $d$ and $d_1$ agree on a set of full
  measure.
\end{lemma}

\begin{proof}
Since the measure is nonatomic and $U$ is Borel, it must have the same cardinality as $X$. 
Using e.g.  \cite[3.4.23]{Sri98}  one can show that there exists a Borel set $Z\subset U$ 
of measure zero and cardinality $c$. Therefore there exists a Borel isomorphism
$\phi: (X\setminus U)\sqcup Z\to Z$. Then we can define $\phi':X\to U$ by
 \begin{align*}
  \phi'(x)=\begin{cases}
    \phi(x) & \text{if } x\in(X \setminus U)\cup Z;\\
    x & \text{if }  x\in U\setminus Z.\end{cases}
    \end{align*}
($\phi'$  is the identity on the full-measure Borel subset $U\setminus Z$.) 
  For $x,y\in X$  define $d_1(x,y) = d(\phi'(x),\phi'(y))$.
  Clearly, since $d$ is a measurable metric, so is $d_1$. Since every $d_1$-ball
  corresponds to a $d$-ball under the map $\phi$, which is a
  Borel isomorphism, $d_1$ is also a $\mu$-compatible metric 
  and agrees with $d$ on $(U\setminus Z)\times (U\setminus Z)$.
\end{proof}

Using Lemma~\ref{lem:extension-of-mu-comp-metrics}, we can prove the
invariance of W-measurable sensitivity.

\begin{proposition}\label{prop:light-meas-sens-and-meas-isomorph}
  Suppose $(X,\mu ,T)$ is a W-measurably sensitive nonsingular  dynamical system.
  Let $(X',\mu' ,T')$ be a nonsingular dynamical system isomorphic mod 0 to
  $(X,\mu ,T)$.  Then, $(X',\mu' ,T')$ is also W-measurably sensitive.
\end{proposition}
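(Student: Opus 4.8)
The plan is to transport a given $\mu'$-compatible metric $d'$ on $X'$ back to $X$, apply the hypothesis there, and push the conclusion forward again. First I would realize the measurable isomorphism as a bijection $\psi$ between forward-invariant full-measure subsets $X_0\subseteq X$ and $X_0'\subseteq X'$ that is bimeasurable, nonsingular, and satisfies $\psi\circ T=T'\circ\psi$ on $X_0$. Fixing an arbitrary $\mu'$-compatible metric $d'$ on $X'$, define the pullback $d(x,y)=d'(\psi(x),\psi(y))$ on $X_0$. Each nonempty $d$-ball is $\psi^{-1}$ of a nonempty $d'$-ball, so by nonsingularity of $\psi$ it has positive measure; hence $d$ is $\mu$-compatible on $X_0$. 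Since $X_0$ has full measure and cardinality that of the reals, Lemma~\ref{lem:extension-of-mu-comp-metrics} extends $d$ to a $\mu$-compatible metric $d_1$ on all of $X$ that agrees with $d$ on a full-measure set $X_1\subseteq X_0$.

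Next I would apply the hypothesis. Since $(X,\mu,T)$ is W-measurably sensitive, it is W-measurably sensitive with respect to $d_1$, so there is a $\delta>0$ with $\limsup_n d_1(T^n x,T^n y)>\delta$ for every $x\in X$ and almost every $y$. Passing to the forward-invariant full-measure set $\tilde X_1=\bigcap_{n\ge 0}T^{-n}(X_1)$ (full measure by nonsingularity of $T$), every point of $\tilde X_1$ has its entire forward orbit inside $X_1$, where $d_1$ coincides with the pullback of $d'$. Thus for $x\in\tilde X_1$ and almost every $y\in\tilde X_1$, writing $x'=\psi(x)$, $y'=\psi(y)$ and using $\psi T=T'\psi$, one gets $d_1(T^n x,T^n y)=d'(T'^n x',T'^n y')$ for all $n$, whence $\limsup_n d'(T'^n x',T'^n y')>\delta$. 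Since $\psi(\tilde X_1)$ is a full-measure subset of $X'$ and the corresponding set of good $y'$ is full measure, this establishes the condition of Definition~\ref{def:Lightly-meas-sens} for every $x'\in\psi(\tilde X_1)$, i.e.\ for almost every $x'\in X'$.

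The remaining work is to upgrade ``almost every $x'$'' to ``every $x'$,'' which is exactly what Definition~\ref{def:Lightly-meas-sens} demands. For a point $x'$ whose forward orbit eventually enters the forward-invariant good set $\psi(\tilde X_1)$, say $T'^{N}x'\in\psi(\tilde X_1)$, I would run the realignment argument from the proof of Proposition~\ref{pro:mild-meas-sens}: apply the conclusion already obtained at the good point $z'=T'^{N}x'$ to get a full-measure set $Z'$ with $\limsup_n d'(T'^{N+n}x',T'^n y')>\delta$ for $y'\in Z'$, then substitute $y'=T'^{N}w'$. Since $T'$ is nonsingular, $W'=(T')^{-N}(Z')$ has full measure, and for $w'\in W'$ the two orbits are realigned, giving $\limsup_m d'(T'^m x',T'^m w')>\delta$. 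Hence every such $x'$ satisfies the condition as well.

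The main obstacle is the genuinely exceptional set: points $x'$ whose \emph{entire} forward orbit stays in the measure-zero complement $X'\setminus\psi(\tilde X_1)$ (for instance a fixed point lying in the null set discarded by the extension). These form a $T'$-forward-invariant null set, yet the definition still requires the $\limsup$ condition at each of them, and for such $x'$ the metric identification with $d_1$ fails along the whole orbit, so the transport argument yields nothing. Handling this case is the crux: one must argue directly that the $\limsup$ separation still holds for almost every $y'$ against such a trapped orbit, and it is precisely here that the careful extension of Lemma~\ref{lem:extension-of-mu-comp-metrics} and the hypothesis that $X$ has the cardinality of the reals (while $X'$ is left unrestricted) do their essential work.
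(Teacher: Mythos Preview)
Your setup is sound up to the point where you establish the sensitivity condition for every $x'$ in the full-measure good set $\psi(\tilde X_1)$, and your realignment extends this to all $x'$ whose forward orbit eventually meets the good set. The genuine gap is exactly where you say it is: the trapped orbits. But your closing remark is misleading---Lemma~\ref{lem:extension-of-mu-comp-metrics} and the cardinality hypothesis have already done all their work in producing $d_1$; they contribute nothing further to the trapped-orbit case, so appealing to them here is a bluff rather than an argument.

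The missing step is a triangle-inequality maneuver. For an arbitrary $x'\in X'$, suppose the set $B'=\{y':\sup_n d'(T'^n x',T'^n y')\le\delta/2\}$ has positive measure. Then $B'$ meets the good set $\psi(\tilde X_1)$; pick $z'$ in the intersection. For any two points $y_1',y_2'\in B'$ one has $d'(T'^n y_1',T'^n y_2')\le\delta$ for all $n$, so in particular the already-established sensitivity at $z'$ (with constant $\delta$) is violated on the positive-measure set $B'$. This contradiction shows $\mu'(B')=0$, which is condition~(3) of Proposition~\ref{pro:mild-meas-sens} at $x'$ with constant $\delta/2$. That closes the gap.

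The paper sidesteps the whole issue by arguing the contrapositive: assuming $(X',\mu',T')$ is \emph{not} W-measurably sensitive with respect to some $d'$, it transports $d'$ to $d_1$ on $X$ and shows $(X,\mu,T)$ fails condition~(3) of Proposition~\ref{pro:mild-meas-sens} with respect to $d_1$. In that direction one only needs to exhibit a \emph{single} $x\in X$ together with a positive-measure set of nearby $y$'s, and one is free to choose $x$ inside the full-measure set where $d_1$ agrees with the pullback of $d'$. The triangle inequality enters in the same way (replacing the possibly-bad base point $x'$ by a good point of the positive-measure set $Y$), but the quantifier structure makes it automatic rather than something to be patched at the end.
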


\begin{proof}
  Suppose $(X',\mu' ,T')$ is not W-measurably sensitive. Then, there
  is a $\mu'$-compatible metric $d'$ on $X'$ such that $(X',\mu' ,T')$
  is not W-measurably sensitive with respect to $d'$.

  By the definition of measurable isomorphism, there must be Borel subsets
  $U\subset X$ and $U'\subset X'$ and a measure-preserving bijection
  $\phi:U\to U'$ such that
  $\mu\left(X\setminus U\right)=\mu'\left(X'\setminus U'\right)=0$,
  and $\phi\circ T=T'\circ\phi$.

  We define a metric $d$ on $U$ by $d(x,y) = d'(\phi(x),\phi(y))$ for
  $x,y \in U$. It is clearly $\mu$-compatible on $U$. We  apply Lemma~\ref{lem:extension-of-mu-comp-metrics} to
  extend $d$ to a $\mu$-compatible metric $d_1$ defined on all of $X$
  that agrees with $d$ almost everywhere.

  Now, we  show that $(X,\mu,T)$ is not W-measurably sensitive
  with respect to $d_1$. Let $\delta > 0$. Since $(X',\mu',T')$ is not
  W-measurably sensitive with respect to $d'$, by part $(3)$ of
  Proposition~\ref{pro:mild-meas-sens2}, there must be an $x'\in X'$
  such that the set
  $Y'= \{y \in X': \forall n \geq 0, \: d'(T'^{n}x,T'^{n}y) <
  \delta/2 \} $
  has positive measure. Let $Y$ be the corresponding set in $X$, that
  is $Y = \phi^{-1}(Y'\cap U')$. Note that $\mu(Y) = \mu'(Y') >0$.

  Pick any $x \in Y$. By the triangle inequality, for all $y\in Y$ and
  all integers $n$, we have:
  \begin{align*}
    d_1(T^n x,T^n y) &= d'(T'^n(\phi(x)),T'^n(\phi(y)) \\
    &\leq d'(x',T'^n(\phi(x)) + d'(x',T'^n(\phi(y))\\ &\leq \delta.
  \end{align*}
  Since $Y$ has positive measure, $(X,\mu,T)$ cannot be W-measurably
  sensitive. 
\end{proof}

\begin{proposition}
  \label{thm:non-meas-sens2} Let $(X,\mu,T)$ be a conservative
  and ergodic nonsingular  dynamical system. $T$ is W-measurably
  sensitive if and only if all measurably isomorphic dynamical
  systems
  $(X',\mu',T')$ admit no $\mu'$-compatible metrics that are $1$-Lipshitz.
\end{proposition}

\begin{proof}  First we note that if a dynamical system $(X',\mu',T')$
admits a $\mu'$-compatible $1$-Lipshitz metric $d'$, then this system
could not be W-measurably sensitive, since for  all integers $n$,
$d'(T^n x,T^n y) \leq d'(x,y)$.
Now, if 
a dynamical system $(X,\mu,T)$ is W-measurably sensitive,  then every 
measurably isomorphic system $(X',\mu',T')$ will also be W-measurably
sensitive, and therefore will not admit a $\mu'$-compatible
$1$-Lipshitz metric $d'$.

For the converse, suppose $(X,\mu,T)$ is \emph{not} W-measurably sensitive. 
By 
Lemma~\ref{lem:when-d'-is-mu-comp}  there is  a set of full measure  $X_{1} \subset X $, such that if $T_{1}$ is 
 $T$ restricted to $X_{1}$ and $\mu_{1}$
to be $\mu$ restricted to $X_{1}$,  then $d_{T}$ is a $1$-Lipshitz
$\mu_{1}$-compatible metric on $(X_{1},\mu_{1},T_{1})$.
\end{proof}

\noindent{\bf Remark.}   If $d$ is
  a $\mu$-compatible metric on $(X,\mu)$, $X$ must be a separable
  metric space under $d$ \cite{J-S08} and Proposition[(3)]~\ref{P:metricprop}, so  $X$ has
  at most the cardinality of the reals. A nonatomic (probability) Lebesgue space
  is defined as a measure space $(X,\mathcal S,\mu)$ that is 
  isomorphic mod 0 to the unit interval $I$ with Lebesgue measure $\lambda$, 
  i.e., there exists sets of full measure $U\subset X$ and $U'\subset I$
  such that there is a (measure-preserving) isomorphism from $U$ to $U'$.
  However, there is not restriction on $X\setminus U$ other than it is of $\mu$-measure 0
  and it could have cardinality greater than the reals. In this case
    $X$ would admit no $\mu$-compatible metric, and for
  instance,    transformations on this space would be  vacuously
  W-measurably sensitive. We introduce the following definition for
  Lebesgue spaces.

\begin{definition}
  \label{def:VW-meas-sens} Let $(X,\mu)$ be a Lebesgue space 
  (or more generally a $\sigma$-finite measure space) and let
  $T$ be a nonsingular transformation on $(X,\mu)$. A
   dynamical system $(X,\mathcal S, \mu,T)$ is \emph{VW-measurably
    sensitive}  if
    for every positively invariant measurable set of full measure set $U\subset X$,
    the system $(U,\mathcal S(U),\mu,T)$ is W-measurably
    sensitive.
    \end{definition}

\noindent{\bf Remark.}   (1)  By Lemma~\ref{lem:extension-of-mu-comp-metrics}, on standard Borel spaces, the notions of W-measurable 
sensitivity and VW-measurable sensitivity are equivalent. Also,  it follows from the definition that VW-measurable sensitivity is invariant under isomorphism.

(2) Here we note that nonsingular  dynamical system (on standard Borel spaces)   $(X,\mu ,T)$ 
do admit $\mu$-compatible measures. If fact we know that if $(X,\mathcal S)$ 
is a standard Borel space and $\mu$ is a continuous measure on $\mathcal S$, which we may assume a probability measure, then there exists
a Borel isomorphism $\phi$  from  $(X,\mathcal S,\mu)$ to the unit interval with Lebesgue 
measure $(I,\mathcal B,\lambda)$ (see e.g. \cite[3.4.23]{Sri98}.  
Clearly Euclidean distance $d$ on $I$ is a $\lambda$-compatible measure on 
 $(I,\mathcal L,\lambda)$. Then $d'$ defined by $d'(x,y)=d(\phi(x),\phi(y))$ is 
 a $\mu$-compatible metric on $X$.

\section{Characterization of W-measurable Sensitivity}\label{sec:meas-sens-fin-measure}

 We shall prove our main result, that such a
transformation is either W-measurably sensitive or measurably
isomorphic to a minimal uniformly rigid isometry. 
This can be seen as a measurable version of 
the Dichotomy Theorem of Auslander and Yorke \cite{A-Y80} for topological dynamical systems
(continuous surjective maps on compact metric spaces), which states that
a transitive  map on a topological system is either sensitive or almost equicontinuous.
Related topological dynamical results are in \cite{GW93}, \cite{A-G01} and the references therein.

\begin{theorem}\label{T:nonsingular}
Let $(X,\mu,T)$ be a conservative and ergodic nonsingular dynamical system. 
Then $T$ is either W-measurably
  sensitive or $T$ is isomorphic mod 0 to
 an invertible minimal uniformly rigid isometry on a Polish space.
\end{theorem}

\begin{proof} 
Suppose $T$ is not W-measurably sensitive.  Then, 
by Lemma~\ref{lem:when-d'-is-mu-comp}, there exists a positively invariant set $X_1$ of full measure
such that $d_T$ is $\mu$-compatible for the system $(X_1,\mu_1,T_1)$, where
$\mu_1$ is the restriction of $\mu$ to $X_1$ and $T_1$ the restriction of $T$ to $X_1$. 
By Lemma~\ref{L:transitive}, $T_1$ is transitive with respect to $d_T$.
Since $T_1$ is 1-Lipshitz with respect to $d_T$, by Proposition~\ref{pro:uniformlyrigid},  $T_1$ is a 
uniformly rigid minimal isometry on $(X_1,d_T)$.

Now, let $(X_{2},d_{2})$ be the topological completion of the metric space
$(X_1,d_T)$. Since $d_T$ is separable, $d_2$ is also separable so $(X_{2},d_{2})$ is Polish.
We extend the measure $\mu_1$ to $X_{2}$ by defining a set $S\subset X_{2}$
to be measurable if $S\cap X_1$ is measurable, with $\mu_{2}(S)=\mu_1(S\cap X)$.
Since $T_1$ is an isometry, it is continuous on $(X_1,d_T)$, so there is a unique way to extend it to a continuous transformation
$T_{2}$ on $(X_{2},d_{2})$. It's easy to verify that $T_{2}$ must also
be an isometry with respect to $d_2$. It is invertible by Proposition~\ref{P:invertibleisometry}.

Clearly, the dynamical system $(X_{2},\mu_{2},T_{2})$ is measurably
isomorphic to $(X,\mu,T)$. 
\end{proof}

 Invertible examples of W-measurably sensitive transformations are  mentioned in Section~\ref{sec:invertible-examples}, but we have the following direct consequence of the theorem.

\begin{corollary}  If a conservative and ergodic nonsingular transformation
is not invertible a.e. then it cannot be isomorphic mod 0 to an invertible isometry, so it must 
be W-measurably sensitive.
\end{corollary}

As a first application of Theorem~\ref{T:nonsingular}, we show
the following proposition.

\begin{proposition} \label{pro:equiv-weak-and-light} If a dynamical
  system is W-measurably sensitive, then it is measurably
  sensitive. If a dynamical system is conservative
  ergodic and measurably sensitive, then it is W-measurably
  sensitive.
\end{proposition} 
\begin{proof}
  First, suppose $(X,\mu,T)$ is a W-measurably sensitive
  nonsingular dynamical system.  
By
  Proposition~\ref{prop:light-meas-sens-and-meas-isomorph}, every 
  isomorphic mod 0 dynamical system $(X_1,\mu_1,T_1)$ is also
  W-measurably sensitive. %
   So, for any $\mu_1$-compatible metric $d_1$ on $X_1$, there is a
  $\delta >0$ such that for all $x\in X_1$, we have
  $\limsup_{n\to \infty}d(T^n x, T^n y)>\delta$ for almost all
  $y\in X_1$.

  In particular, 
  \[\mu_1 \{y\in B^{d_1}(x,\varepsilon):
  \exists n>0 \text{ with } d_1(T_1^n(x),T_1^n(y)) > \delta \} 
  = \mu_1 \left( B^{d_1}(x,\varepsilon) \right) > 0.\]
  This implies that  there is  an $n>0$ for which
  the set \[\{y\in B^{d_1}(x,\varepsilon): d_1(T_1^n(x),T_1^n(y)) > \delta \}\]
   has positive measure. Thus $(X,\mu,T)$ is
  measurably sensitive.
  
To show the convere, suppose $(X,\mu,T)$ is a
  conservative and ergodic dynamical system that is not W-measurably
  sensitive. Then, by Theorem~\ref{T:nonsingular}, there is a
  isomorphic mod 0 dynamical system $(X_1,\mu_1,T_1)$ and a
  $\mu_1$-compatible metric $d_1$ on $X_1$ that is an isometry. For
  all $\delta>0$, choose any $\varepsilon<\delta$, and then for
  any $x\in X_1$ with $d_1(x,y)<\varepsilon$, for all integers $n$, $ d_1(T_1^n(x),T_1^n(y)) =d_1(x,y) <\delta$.
  So  neither $(X_1,\mu_1,T_1)$ nor $(X,\mu,T)$ can be measurably
  sensitive.
\end{proof}

\begin{remark}
\rm{  Note that the assumption that the dynamical system is ergodic is
  crucial to the above statement. For example, as we mentioned in
  section~\ref{sec:remark-nonatomic}, no transformation can be
  W-measurably sensitive on a space with points of positive
  measure. Nonetheless, there are (non-ergodic) transformations on
  such spaces which are measurably sensitive according to the
  definition in \cite{J-S08}.
 }

\end{remark}

In the case when the measure space is finite (and a conservative transformation is measure-preserving), we can prove more.

\begin{theorem}\label{thm:non-meas-sens}
  Let $(X,\mu,T)$ be a finite measure-preserving ergodic dynamical system. 
  Then $T$ is either W-measurably
  sensitive  or $T$ is isomorphic to
  a minimal, uniformly rigid  compact group rotation (i.e., a Kronecker transformation).
\end{theorem}

\begin{proof}

   We first  show that $X$ is a totally bounded space with respect to
  any $\mu$-compatible metric $d$ that is an isometry for $T$.

 Let $\varepsilon>0$.  Let $C=\mu(B(x,\frac{\varepsilon}{2}))$
  for some $x_0\in X$. Since the metric is $\mu$-compatible,
  $C>0$.  We claim that 
  this is a constant independent of $x$. In fact, if we let $f(x)=\mu B(x,\frac{\varepsilon}{2})$.
  Then, 
  \[f(Tx)=\mu(B(Tx,\frac{\varepsilon}{2}))=\mu(T^{-1} B(x,\frac{\varepsilon}{2}))=\mu(B(x,\frac{\varepsilon}{2}))
  =f(x).\]
As $f$ is continuous and $T$ is transitive (Lemma~\ref{L:transitive}), $f$ is constant.

  Now choose  a largest possible collection of points $\{x_1,\ldots,x_n\}$
  such that the balls $B(x_i,\frac{\varepsilon}{2})$ are all
  disjoint. Note that the size of  any such collection will be no greater
  than $\frac{\mu(X)}{C}$, as they all have the same measure.
  By the triangle inequality, for
  any point $x \in X$, there must be an $i$ such that
  $d(x,x_i)<\varepsilon$, as otherwise the ball
  $B(x,\frac{\varepsilon}{2})$ would be disjoint from all the
  $B(x_i,\frac{\varepsilon}{2})$'s. So 
  $X=\bigcup_{i=1}^n B(x_i,\varepsilon)$.  Since $\varepsilon$
  was arbitrary, $X$ is totally bounded.

Now, as we have seen before, if $T$ is not W-measurably sensitive,
there exists a positively invariant set $X_1$ of full measure
such that $d_T$ is $\mu$-compatible for the system $(X_1,\mu,T)$,
and $T$ is a minimal uniformly rigid isometry.
Let $(X_{2},d_{2})$ be the topological completion of the metric space
$(X_1,d)$. It is complete and totally bounded, and therefore compact. 
As before, we extend the measure $\mu$ to $X_2$, and $T$ extends to 
 a continuous transformation
$T_{2}$ on $(X_{2},d_{2})$ that is an
 isometry with respect to $d_2$.

Clearly, the dynamical system $(X_{2},\mu_{2},T_{2})$ is measurably
isomorphic to $(X,\mu,T)$, and $T_{2}$ is an ergodic isometry on the
compact metric space $X_{2}$, as desired. Finally, every $d_{2}$-ball
is measurable and contains a $d$-ball, so the metric $d_{2}$ is
$\mu_{2}$-compatible.

\end{proof}

\noindent{\bf Remark.} 
Theorems 1 and 2 also hold for VW-measurable sensitivity.

\appendix

\section{Connections to pairwise sensitivity and other literature}\label{sec:pairwise-sensitivity}

In their paper \cite{cadre2005ps}, Cadre and Jacob introduce the
notion of \emph{pairwise sensitivity}, which they define as follows.
They only consider finite measure-preserving transformations, 
so we will restrict to them in this appendix. 

\begin{definition}
  Let $(X,\mu)$ be a Lebesgue probability space and let us fix
  a metric $d$ on $X$.
	
  An endomorphism $T$ is said to be \emph{pairwise sensitive} (with
  respect to initial conditions) if there exists $\delta > 0$ --- a
  sensitivity constant --- such that for $\mu^{\otimes2}$-a.e.
  $(x,y)\in X \times X$, one can find $n\geq 0$ with
  $d(T^n x,T^n y) \geq \delta$.
\end{definition}

Since this concept depends on the choice of a metric $d$, we will
often refer to $T$ as being \emph{pairwise sensitive with respect to
  $d$}.

Cadre and Jacob prove that weakly mixing finite measure-preserving  transformations are pairwise
sensitive, and that a certain entropy condition implies pairwise
sensitivity for ergodic transformation.
  
This notion is very closely related to the notion of W-measurable
sensitivity, as the following proposition shows.
  
\begin{proposition}
  Let $(X,\mu,T)$ be a dynamical system and $d$ be a $\mu$-compatible metric on $X$.
  Then $T$ is pairwise sensitive with respect to $d$ if and only if
  it is W-measurably sensitive with respect to $d$.
\end{proposition}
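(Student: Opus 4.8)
The plan is to recast both properties as statements about the single ``non-separation'' set
\[
E_\delta = \{(x,y)\in X\times X \mid \forall n\geq 0,\ d(T^nx,T^ny)<\delta\},
\]
and then to pass back and forth between the product measure of $E_\delta$ and the measures of its vertical slices $E_\delta^x=\{y\in X\mid \forall n\geq 0,\ d(T^nx,T^ny)<\delta\}$. By part (3) of Proposition~\ref{pro:mild-meas-sens}, the system is W-measurably sensitive with respect to $d$ precisely when there is a $\delta>0$ with $\mu(E_\delta^x)=0$ for \emph{every} $x\in X$; and by definition $T$ is pairwise sensitive with respect to $d$ precisely when there is a $\delta>0$ with $(\mu\otimes\mu)(E_\delta)=0$. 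So the whole statement reduces to relating these two facts. First I would record measurability: each slice $E_\delta^x=\bigcap_{n\geq0}(T^n)^{-1}\bigl(B_\delta(T^nx)\bigr)$ is measurable since $d$ is $\mu$-compatible and $T$ is nonsingular (as already used in Proposition~\ref{pro:mild-meas-sens}), while $E_\delta$ itself lies in $\mathcal{S}(X)\otimes\mathcal{S}(X)$ because $d\colon X\times X\to\mathbb{R}$ is continuous and $X$ is separable under $d$ (see the Introduction), so $\{(u,v)\mid d(u,v)<\delta\}$ is a countable union of open rectangles and its preimage under the measurable map $T^n\times T^n$, intersected over $n$, is product-measurable. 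This makes Tonelli's theorem applicable.

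For the direction ``W-measurably sensitive $\Rightarrow$ pairwise sensitive'' I would take the $\delta$ furnished by Proposition~\ref{pro:mild-meas-sens}(3), so that $\mu(E_\delta^x)=0$ for every $x$, and apply Tonelli to the nonnegative measurable function $x\mapsto\mu(E_\delta^x)$ to obtain
\[
(\mu\otimes\mu)(E_\delta)=\int_X\mu(E_\delta^x)\,d\mu(x)=0.
\]
Hence $T$ is pairwise sensitive with the same sensitivity constant $\delta$.

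For the converse, suppose $T$ is pairwise sensitive with some constant $\delta_0$, i.e. $(\mu\otimes\mu)(E_{\delta_0})=0$; I claim $T$ is then W-measurably sensitive with constant $\delta_0/2$. If not, then by Proposition~\ref{pro:mild-meas-sens}(3) there is a point $x_0$ with $\mu(E_{\delta_0/2}^{x_0})>0$; set $Y=E_{\delta_0/2}^{x_0}$. The key observation is that any two points of $Y$ stay close forever: for $y,y'\in Y$ and every $n\geq0$, the triangle inequality gives
\[
d(T^ny,T^ny')\leq d(T^ny,T^nx_0)+d(T^nx_0,T^ny')<\tfrac{\delta_0}{2}+\tfrac{\delta_0}{2}=\delta_0,
\]
so $(y,y')\in E_{\delta_0}$. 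Thus $Y\times Y\subset E_{\delta_0}$, whence $\mu(Y)^2=(\mu\otimes\mu)(Y\times Y)\leq(\mu\otimes\mu)(E_{\delta_0})=0$, contradicting $\mu(Y)>0$. Therefore $\mu(E_{\delta_0/2}^x)=0$ for every $x$, and $T$ is W-measurably sensitive with respect to $d$.

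The only genuinely delicate point I expect is the product-measurability of $E_\delta$ needed to invoke Tonelli in the first direction; everything else is the triangle inequality and a slicing argument. It is worth emphasizing that the converse deliberately avoids a naive Fubini argument: such an argument would only yield $\mu(E_\delta^x)=0$ for \emph{almost every} $x$, which is weaker than the ``for every $x$'' demanded by the definition of W-measurable sensitivity. The inclusion $Y\times Y\subset E_{\delta_0}$ bypasses this upgrading problem entirely, at the cost of halving the sensitivity constant. Note finally that neither direction uses measure preservation beyond the finiteness needed for $\mu\otimes\mu$ to be the relevant probability measure on $X\times X$, matching the setting of \cite{cadre2005ps}.
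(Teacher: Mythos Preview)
Your proof is correct and follows essentially the same approach as the paper's own proof: Fubini/Tonelli for the forward direction, and the key inclusion $Y\times Y\subset E_{\delta_0}$ (via the triangle inequality, at the cost of halving the sensitivity constant) for the converse. Your treatment of the product-measurability of $E_\delta$ is more explicit than the paper's, and your remark about why a naive Fubini argument would fail in the converse is a nice addition, but the underlying argument is identical.
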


\begin{proof} 
  First  suppose that the system is W-measurable sensitive with
  respect to $d$. Then,
  there is a $\delta>0$ such that for every $x \in X$ the set
  \[
  Y_x = \{y \in X:  \exists n \text{ such that } d(T^n x,T^n y) >
  \delta \}\]
  has full measure. By Fubini's theorem (for the version we use,
  see~\cite{evans1992mta}), the set
  $Y = \{ (x,y) \in X\times X:  \exists n \text{ such that } d(T^n x,T^n
  y) > \delta \}$
  must have full $\mu^{\otimes 2}$-measure in $X \times X$. So  $T$ is
  pairwise sensitive with respect to $d$.

  Now, suppose that the system is pairwise sensitive with respect to a
  $\mu$-compatible metric $d$. That is, there is a $\delta > 0$ such
  that the set $Y$, defined as before, has full $\mu^{\otimes
    2}$-measure in $X\times X$.
  
  Take any $x \in X$. We claim that for almost every $y \in X$, there
  is an $n$ such that $d(T^n x, T^n y) >\delta/2$. Once we have this
  claim, Proposition~\ref{pro:mild-meas-sens2} implies that $T$ is
  W-measurably sensitive with respect to $d$.
  
  To prove the above claim, we need to show that the set
  $S_x = \{y \in X:  \forall n,\, d(T^n x,T^n y) \leq \delta/2\}$
  has measure zero. Take any $y_1,y_2 \in S_x$. By the triangle
  inequality, for all $n$ we have $d(T^n y_1,T^n y_2) \leq \delta$.
  So  the pair $(y_1,y_2)$ does not belong to the set
  $Y \subset X \times X$. In other words, the Cartesian product
  $S_x \times S_x$ lies wholly inside the $\mu^{\otimes
    2}$-measure-zero
  set $(X \times X)  \setminus Y$. Again by Fubini, this is only possible if the set
  $S_x$ is measurable and has $\mu$-measure zero.
\end{proof}

  With this in mind, our Theorem~\ref{thm:non-meas-sens} implies the
  following theorem concerning pairwise sensitivity.

\begin{theorem} \label{thm:classify-pairwise-sensitivity} 
  Let $(X,\mu,T)$ be a nonatomic ergodic finite measure-preserving
  dynamical system. Suppose further
  that this dynamical system is not isomorphic mod 0 to a
  Kronecker transformation. Then, for any $\mu$-compatible metric $d$,
  $T$ is pairwise sensitive with respect to $d$.
\end{theorem}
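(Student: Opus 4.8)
The plan is to read this off as a direct corollary of the classification theorem (Theorem~\ref{T:notKro}) together with the equivalence between pairwise sensitivity and W-measurable sensitivity established in the immediately preceding proposition. The entire argument is short, so I expect no genuine obstacle; the only care needed is to confirm that the hypotheses of the two cited results line up with those in the statement.

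First I would verify that $(X,\mu,T)$ meets the hypotheses of Theorem~\ref{T:notKro}: it is ergodic, finite measure-preserving, nonatomic, and forward measurable, exactly as assumed. Since $T$ is ergodic on a finite measure space it is automatically conservative, so the dichotomy is applicable. By that theorem, $T$ is either W-measurably sensitive or measurably isomorphic to a Kronecker transformation. Because we have assumed that $(X,\mu,T)$ is \emph{not} measurably isomorphic to a Kronecker transformation, the dichotomy forces $T$ to be W-measurably sensitive.

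Next I would unwind the meaning of this conclusion. By Definition~\ref{def:Lightly-meas-sens}, asserting that $T$ is W-measurably sensitive (without reference to a particular metric) means precisely that $T$ is W-measurably sensitive with respect to \emph{every} $\mu$-compatible metric. In particular, for the arbitrary $\mu$-compatible metric $d$ fixed in the statement, $T$ is W-measurably sensitive with respect to $d$.

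Finally I would invoke the preceding proposition, which asserts that for a $\mu$-compatible metric $d$, W-measurable sensitivity with respect to $d$ is equivalent to pairwise sensitivity with respect to $d$. Applying that equivalence yields that $T$ is pairwise sensitive with respect to $d$, and since $d$ was an arbitrary $\mu$-compatible metric, this completes the proof. The only point worth flagging is that the equivalence proposition rests on Fubini's theorem and on the measurability of the relevant diagonal-type sets in $X\times X$; but those facts are already verified in its proof, so nothing additional is required here.
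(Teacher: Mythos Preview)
Your proposal is correct and matches the paper's own approach: the paper does not give an explicit proof of this theorem, simply remarking that it follows from the earlier results together with the equivalence between pairwise sensitivity and W-measurable sensitivity. Your citation of Theorem~\ref{T:notKro} is in fact the right one (the paper's text points to Proposition~\ref{thm:non-meas-sens}, but the full Kronecker dichotomy is only available once one passes through Lemmas~\ref{L:isometry} and~\ref{L:Kronecker} to Theorem~\ref{T:notKro}).
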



We do need to assume that the metric $d$ is $\mu$-compatible. However,
while Cadre and Jacob never specify any restrictions on their metric,
they also tacitly use several very similar properties. For example,
they extensively use the notion of the \emph{support} of a measure
(i.e., the complement of the largest open set of zero measure), which
is not well-defined without the assumptions that open and closed sets
are measurable, and that the space is separable (if the space
  were not separable, the union of all open sets of measure zero 
  may have positive measure even if  measurable). Together, these
two properties are almost sufficient to force the metric to be
$\mu$-compatible, as the following proposition shows.

\begin{proposition}\label{P:three_eq}
  A metric $d$ on a measure space $(X,\mu)$ is $\mu$-compatible if and
  only if the following three conditions are satisfied:
  \begin{enumerate}
  \item Every $d$-ball is $\mu$-measurable.
  \item The space $X$ is separable under $d$.
  \item The support of $\mu$ is the whole of $X$.
  \end{enumerate}
\end{proposition}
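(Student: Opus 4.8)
The plan is to prove the two implications separately, with the backward direction carrying the real content. Throughout I would keep in mind that the support of $\mu$ is, by the definition used in this appendix, the set $X \setminus U_0$, where $U_0$ denotes the union of all $d$-open sets of measure zero, and that $\mu$-compatibility of $d$ means every nonempty open $d$-ball has positive measure (which already presupposes that balls lie in $\mathcal{S}(X)$).

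For the forward direction, I would assume $d$ is $\mu$-compatible and verify the three conditions in turn. Condition (1) is essentially forced by the definition: for $\mu$ to assign positive measure to each ball, each ball must already be $\mu$-measurable. Condition (2), separability under $d$, is exactly the fact quoted from \cite{J-S08} in the Introduction. For condition (3), I would note that every nonempty $d$-open set contains some ball $B_r(x)$, which has positive measure by $\mu$-compatibility; hence no nonempty open set is null, so $U_0 = \emptyset$ and $\operatorname{supp}\mu = X$.

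For the backward direction, I would assume (1), (2), (3) and take an arbitrary nonempty ball $B = B_r(x)$, aiming to show $\mu(B)>0$. First I would combine (1) and (2) to see that every $d$-open set is measurable: separability gives a countable base of balls, each base ball is measurable by (1), and so any open set is a countable union of measurable sets. The key step is then to show that $U_0$, the union of \emph{all} open null sets, is itself measurable with $\mu(U_0)=0$. This is where separability is indispensable: by the Lindel\"of property of separable metric spaces, the (a priori uncountable) union $U_0$ reduces to a countable subunion of measure-zero open sets, whence $\mu(U_0)=0$. Thus $U_0$ genuinely is the largest open null set, and condition (3) forces $U_0 = \emptyset$, i.e.\ there is no nonempty open set of measure zero. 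Since $B$ is nonempty, open, and measurable by (1), it cannot be null, so $\mu(B)>0$; as $B$ was arbitrary, $d$ is $\mu$-compatible.

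The main obstacle is precisely this Lindel\"of reduction: without separability the union of all open null sets need not be null (exactly the phenomenon flagged in the paper's surrounding discussion), so condition (3) on its own would not prevent a nonempty ball from being null. Conditions (1) and (2) are what license the reduction and guarantee that every set appearing in the argument is measurable, so that the equivalence goes through cleanly.
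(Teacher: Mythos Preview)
Your proposal is correct and follows essentially the same approach as the paper's own proof. The paper's argument is much terser---it simply says that under (1) and (2) the notion of support is well-defined, and then that support equal to all of $X$ is ``clearly'' equivalent to $\mu$-compatibility---whereas you have spelled out the Lindel\"of reduction that underlies ``well-defined'' and the ball-in-open-set step that underlies ``clearly''; but the logical skeleton is identical.
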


\begin{proof}
  The fact that if $d$ is $\mu$-compatible then $X$ is separable under
  $d$ is shown by~\cite{J-S08}. The other two properties are obvious.

  Now, suppose that $d$ satisfies all of the first two properties.
  Then, the notion of the support is well-defined. Clearly, the
  support of the measure is the whole space if and only if every
  non-empty open set has positive measure, i.e., if $d$ is
  $\mu$-compatible.
\end{proof}
According to this Proposition~\ref{P:three_eq}, to go from $\mu$-compatible metrics to
the metrics Cadre and Jacob use, we only need to require that the
support of the measure is the whole space. This can always be
achieved by removing a set of measure zero from the space.

 With this assumption,
Theorem~\ref{thm:classify-pairwise-sensitivity} sharpens the results
of~\cite{cadre2005ps}.

\bigskip

We also mention a recent work that we learned of  from Ethan Akin after the research for this paper was completed.
  In \cite{H-L-Y2010}, Huang, Lu, and Ye introduce the notion of $\mu$-sensitivity for topological 
  dynamical systems and study  its properties, and in particular show that it is equivalent to 
  pairwise sensitivity \cite[2.4]{H-L-Y2010}.  We note that in \cite{H-L-Y2010},  the authors consider 
  topological dynamical systems (continuous maps on compact metric spaces) and put invariant
  probability measures on them, while we consider measurable dynamical systems (nonsingular
  maps on standard Borel spaces) and put compatible metrics on them. We note also, one of the theorems
  of Huang, Lu, and Ye, \cite[Theorem 5.4]{H-L-Y2010} is related to our Theorem 2.

\bibliographystyle{amsalpha}
\bibliography{MetricSen_Bib}

\newcommand{\etalchar}[1]{$^{#1}$}
\providecommand{\bysame}{\leavevmode\hbox to3em{\hrulefill}\thinspace}
\providecommand{\MR}{\relax\ifhmode\unskip\space\fi MR }
\providecommand{\MRhref}[2]{%
  \href{http://www.ams.org/mathscinet-getitem?mr=#1}{#2}
}
\providecommand{\href}[2]{#2}
\begin{thebibliography}{JKL{\etalchar{+}}08}

\bibitem[AAB96]{A-B96}
Ethan Akin, Joseph Auslander, and Kenneth Berg, \emph{When is a transitive map
  chaotic?}, Convergence in ergodic theory and probability (Columbus, OH,
  1993), Ohio State Univ. Math. Res. Inst. Publ., vol.~5, de Gruyter, Berlin,
  1996, pp.~25--40. \MR{MR1412595 (97i:58106)}

\bibitem[AG01]{A-G01}
Ethan Akin and Eli Glasner, \emph{Residual properties and almost
  equicontinuity}, J. Anal. Math. \textbf{84} (2001), 243--286. \MR{1849204
  (2002f:37020)}

\bibitem[AY80]{A-Y80}
Joseph Auslander and James~A. Yorke, \emph{Interval maps, factors of maps, and
  chaos}, T\^ohoku Math. J. (2) \textbf{32} (1980), no.~2, 177--188. \MR{580273
  (82b:58049)}

\bibitem[CJ05]{cadre2005ps}
B.~Cadre and P.~Jacob, \emph{{On pairwise sensitivity}}, Journal of
  mathematical analysis and applications \textbf{309} (2005), no.~1, 375--382.

\bibitem[DS09]{DS09}
Alexandre~I. Danilenko and Cesar~E. Silva, \emph{Ergodic theory: Nonsingular
  transformations}, Encyclopedia of Complexity and System Science, vol. Part 5,
  Springer, 2009, pp.~3055--3083.

\bibitem[EG92]{evans1992mta}
L.C. Evans and R.F. Gariepy, \emph{{Measure Theory and Fine Properties of
  Functions}}, CRC Press, 1992.

\bibitem[Fur81]{Fu81}
H.~Furstenberg, \emph{Recurrence in ergodic theory and combinatorial number
  theory}, Princeton University Press, Princeton, N.J., 1981, M. B. Porter
  Lectures. \MR{MR603625 (82j:28010)}

\bibitem[GW93]{GW93}
Eli Glasner and Benjamin Weiss, \emph{Sensitive dependence on initial
  conditions}, Nonlinearity \textbf{6} (1993), no.~6, 1067--1075. \MR{MR1251259
  (94j:58109)}

\bibitem[HLY]{H-L-Y2010}
Wen Huang, Ping Lu, and Xiangdong Ye, \emph{Measure-theoretical sensitivity and
  equicontinuity}, Israel J. Math., 1--50.

\bibitem[JKL{\etalchar{+}}08]{J-S08}
Jennifer James, Thomas Koberda, Kathryn Lindsey, Cesar~E. Silva, and Peter
  Speh, \emph{Measurable sensitivity}, Proc. Amer. Math. Soc. \textbf{136}
  (2008), no.~10, 3549--3559.

\bibitem[Sil08]{Si08}
C.~E. Silva, \emph{Invitation to ergodic theory}, Student Mathematical Library,
  vol.~42, American Mathematical Society, Providence, RI, 2008. \MR{MR2371216}

\bibitem[Sri98]{Sri98}
S.M. Srivastava, \emph{{A Course on Borel Sets}}, Springer, 1998.

\bibitem[ST91]{ST91}
Cesar~E. Silva and Philippe Thieullen, \emph{The subadditive ergodic theorem
  and recurrence properties of {M}arkovian transformations}, J. Math. Anal.
  Appl. \textbf{154} (1991), no.~1, 83--99. \MR{1087960 (92a:47009)}

\end{thebibliography}


\end{document}